\documentclass[12pt,twoside]{amsart}
\usepackage{amsmath}
\usepackage{amsthm}
\usepackage{amsfonts}
\usepackage{amssymb}
\usepackage{latexsym}
\usepackage[all]{xy}

\date{}
\pagestyle{plain}
\textheight= 21.6 true cm \textwidth =14 true cm
\allowdisplaybreaks[4] \footskip=15pt
\renewcommand{\uppercasenonmath}[1]{}

\topmargin=27pt \evensidemargin0pt \oddsidemargin0pt
\numberwithin{equation}{section} \theoremstyle{plain}
\newtheorem*{thm*}{Main Theorem}
\newtheorem{thm}{Theorem}[section]
\newtheorem{cor}[thm]{Corollary}
\newtheorem*{cor*}{Corollary}
\newtheorem{lem}[thm]{Lemma}
\newtheorem*{lem*}{Lemma}

\newtheorem*{que*}{Question}

\newtheorem*{prop*}{Proposition}
\newtheorem{rem}[thm]{Remark}
\newtheorem*{rem*}{Remark}
\newtheorem{exa}[thm]{Example}
\newtheorem*{exa*}{Example}
\newtheorem{df}[thm]{Definition}
\newtheorem*{df*}{Definition}

\newtheorem*{conj*}{Conjecture}

\newtheorem*{ack*}{ACKNOWLEDGEMENTS}




\newcommand{\pf}{\noindent\begin {proof}}
\newcommand{\epf}{\end{proof}}


\begin{document}
\begin{center}
{\large  \bf A note on rings with the summand sum  property}

\vspace{0.8cm} {\small \bf  Liang Shen\\
 \vspace{0.6cm} {\rm
Department of Mathematics, Southeast University
 \\ Nanjing 210096, P.R. China}\\}

 E-mail: {\it lshen@seu.edu.cn}

 \end{center}


\bigskip
\centerline { \bf  ABSTRACT}
 \bigskip
\leftskip10truemm \rightskip10truemm

\noindent A ring $R$ is called right SSP (SIP) if the sum (intersection) of any two direct summands of $R_{R}$ is also a direct summand. Left sides can be defined similarly.  The following are equivalent: (1) $R$ is right SSP. (2) $R$ is right C3 and right SIP. (3) $R$ is left C3 and left SIP. (4) $R$ is left SSP.  It is also shown that  (1) $R$ is a von-Neumann regular ring if and only if $\mathbb{M}_{2}(R)$ is right SSP if and only if $\mathbb{M}_{n}(R)$ is right SSP for some $n>1$; (2) $R$ is a semisimple  ring if and only if the column finite  matrix ring $\mathbb{C}\mathbb{F}\mathbb{M}_{\Lambda}(R)$ is right SSP for a countably infinite set $\Lambda$ if and only if the column finite  matrix ring $\mathbb{C}\mathbb{F}\mathbb{M}_{\Lambda}(R)$ is right SSP for any infinite set $\Lambda$. Some known results are improved. \leftskip0truemm \rightskip0truemm
 \\\\{\it Keywords}: SSP, SIP, C3, regular ring, semisimple ring
 \\\noindent {\it Mathematics Subject
Classification}: 16E50, 15A30

\bigskip
\section { \bf INTRODUCTION}
\bigskip

\indent Throughout this paper, rings are associative with
identity and modules are unitary modules.  We denote by $\mathbb{N}$  the set of natural numbers. For a ring $R$, $\mathbb{M}_{n}(R)$ denotes the ring of all $n\times n$ matrices over $R$. Let $\Lambda$ be an infinite set. $\mathbb{C}\mathbb{F}\mathbb{M}_{\Lambda}(R)$ means the column finite $card(\Lambda)\times card(\Lambda)$ matrix ring over $R$, where $card(\Lambda)$ is the cardinality of $\Lambda$.   For a module $M$, $M^{(A)}$ is the direct sum of copies of $M$ indexed by a set $A$.  We use $N\leq_{\oplus}M$ to show that  $N$ is a direct summand of $M$.  And use End($M$) to denote the ring of endomorphisms of $M$.  For a subset $X$ of a ring $R$, the left annihilator of $X$ in $R$ is ${\bf l}(X)=\{r\in R: rx=0$ for all $x\in X\}$. Right annihilators are defined
analogously. \\
\indent Let $R$ be a ring. Recall that an  $R$-module $M$ has {\it the summand intersection property}  (SIP) if the intersection of any two direct summands of  $M$ is also a direct summand of $M$.  $M$ has {\it the summand sum property}  (SSP) if the sum of any two direct summands of  $M$ is also a direct summand of $M$. The two dual definitions of modules have been studied by several authors such as Wilson \cite{W86}, Garcia \cite{G89}, Hausen \cite{H89}, Alkan,  Harmanci \cite{AH02} and so on. \\
 \indent    It is well-known that the above two dual definitions of modules can't inform each other. But when we especially concentrate on such properties of rings, several unexpected and interesting results appear. A ring $R$ is called \emph{ right SSP (SIP)} if the sum (intersection) of any two direct summands of the right $R$-module $R_{R}$ is also a direct summand. Left sides can be defined similarly.  Recall that an $R$-module $M$ is called a \emph{C3} module if whenever $N\leq_{\oplus}M$ and $K\leq_{\oplus}M$  such that $N\cap K=0$, then $N+K\leq_{\oplus}M$. A ring $R$ is called \emph{right (left) C3} if $R_{R}$ ($_{R}R$) is a C3 module. It is clear that a right SSP ring is right C3. By Theorem 2.4 and Theorem 2.7,  a ring $R$ is right SSP if and only if it is right C3 and right SIP if and only if it is left C3 and left SIP if and only if it is left SSP.  Several examples are given. It is shown in Theorem 2.14 that a ring $R$ is von-Neumann regular if and only if $\mathbb{M}_{2}(R)$ is  SSP if and only if  $\mathbb{M}_{n}(R)$ is  SSP for some $n> 1$ if and only if $\mathbb{M}_{n}(R)$ is  SSP for any $n> 1$.  At last, in Theorem 2.17,  it is proved that  a ring $R$ is  semisimple  if and only if  $\mathbb{C}\mathbb{F}\mathbb{M}_{\Lambda}(R)$ is right SSP for a countably infinite set $\Lambda$ if and only if  $\mathbb{C}\mathbb{F}\mathbb{M}_{\Lambda}(R)$ is right SSP for any infinite set $\Lambda$. Some known results are improved.

\bigskip
\section { \bf  RESULTS}
\bigskip
\begin{df}\label{def 2.1}
 {\rm A ring $R$ is called right SSP (SIP) if the sum (intersection) of any two direct summands of the right $R$-module $R_{R}$ is also a direct summand. Left SSP (SIP) rings can be defined similarly. A ring $R$ is called SSP (SIP) if $R$ is right and left SSP (SIP).}
\end{df}
First we show  that the SSP definitions of rings are left-right symmetric.
\begin{lem}\label{lem 2.2 }
 Let e and f be two idempotents of a ring $R$.
Then
\begin{center}
eR+fR $\leq_{\oplus}R_{R}$ if and only if (1-e)fR
$\leq_{\oplus}R_{R}$.
\end{center}
\end{lem}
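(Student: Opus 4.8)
The plan is to analyze the module $eR+fR$ and show that it is a direct summand of $R_R$ precisely when the "new part" $(1-e)fR$ is a direct summand. The key observation is that $eR + fR = eR \oplus (1-e)fR$ as right $R$-modules. Indeed, every element of $eR+fR$ can be written as $ex + fy$ for some $x,y\in R$, and decomposing $fy = efy + (1-e)fy$ gives $ex+fy = e(x+fy) + (1-e)fy \in eR + (1-e)fR$; conversely $(1-e)fy = fy - efy \in fR + eR$. To see the sum is direct, suppose $er = (1-e)fs$ for some $r,s\in R$; multiplying on the left by $e$ gives $er = e(1-e)fs = 0$, so the intersection $eR \cap (1-e)fR$ is zero. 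Hence $eR+fR = eR\oplus(1-e)fR$.

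**Next**, I would use the fact that $eR$ is itself a direct summand of $R_R$ (since $e$ is idempotent, $R_R = eR \oplus (1-e)R$). There is a general, routine module-theoretic fact: if $A$ is a direct summand of a module $M$ and $B$ is a submodule with $A\cap B = 0$, then $A\oplus B$ is a direct summand of $M$ if and only if $B$ is a direct summand of $M$. The forward direction of this fact needs a small argument (if $A\oplus B \leq_\oplus M$, then since $A \leq_\oplus A\oplus B$, transitivity of "direct summand" — or rather, the complement of $A$ inside $A\oplus B$ — combined with a complement of $A\oplus B$ in $M$ shows $B$ has a complement in $M$); the reverse direction is immediate since $A \leq_\oplus M$ and any summand of $M$ stays a summand after we split off more. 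Applying this with $A = eR$, $B = (1-e)fR$, $M = R_R$ yields: $eR+fR = eR\oplus(1-e)fR \leq_\oplus R_R$ if and only if $(1-e)fR \leq_\oplus R_R$, which is exactly the claim.

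**The main obstacle** is the forward implication of the general fact in the previous paragraph — showing that splitting a summand off a summand leaves a summand. Concretely, if $eR\oplus(1-e)fR = gR$ for some idempotent $g$ with $R_R = gR\oplus(1-g)R$, I need to produce an idempotent whose image is $(1-e)fR$. Since $eR$ is a direct summand of $gR$ (as $eR \leq_\oplus R_R$ and $eR\subseteq gR$), write $gR = eR \oplus C$ for some submodule $C$; then $(1-e)fR$ and $C$ are both complements of $eR$ in $gR$, hence isomorphic, but more usefully one checks directly that $(1-e)fR = (1-e)gR$ under our decomposition, and $(1-e)g$ relates to an idempotent via the standard trick: since $1-e$ and $g$ need not commute, one passes to the idempotent $h = g - eg \in \mathrm{End}(R_R)$ acting by left multiplication, verifies $h^2 = h$ using $eg = ege$ (because $eR\subseteq gR$ forces $ge = e$, wait — rather $e\in gR$ gives $ge=e$, hence $ege = e^2 = e$ and the computation $h^2 = (g-eg)(g-eg) = g - eg - ege g + eg\cdot eg$ collapses using $ge=e$), and concludes $hR = (1-e)fR$ is a direct summand. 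I would present this computation cleanly by choosing the right idempotent at the outset — left multiplication by $g - eg$ (equivalently, using that $e = ge$) — so that the verification $h^2=h$ is one line and $hR = (g-eg)R = (1-e)gR = (1-e)fR$ follows from $eR\oplus(1-e)fR = gR$. The reverse implication requires no real work.
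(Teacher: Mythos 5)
Your decomposition $eR+fR=eR\oplus(1-e)fR$ and your handling of the forward implication are fine (indeed the idempotent $h=g-eg$ computation is correct, though overkill: $(1-e)fR$ is a direct summand of $eR\oplus(1-e)fR$, which is a direct summand of $R_{R}$, and ``direct summand of a direct summand'' is transitive — this is exactly the paper's easy direction). The genuine gap is in the direction you dismiss as requiring ``no real work.'' The reverse half of your ``general, routine module-theoretic fact'' is false: from $A\leq_{\oplus}M$, $B\leq M$, $A\cap B=0$ and $B\leq_{\oplus}M$ it does \emph{not} follow that $A\oplus B\leq_{\oplus}M$; that implication is precisely the C3 condition this very paper is studying, and it fails in general. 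Concretely, over $\mathbb{Z}$ take $M=\mathbb{Z}\oplus\mathbb{Z}$, $A=\mathbb{Z}(1,0)$, $B=\mathbb{Z}(1,2)$: both are direct summands, $A\cap B=0$, but $A\oplus B=\mathbb{Z}\oplus 2\mathbb{Z}$ is not a summand, since a complement would be isomorphic to $M/(A\oplus B)\cong\mathbb{Z}/2\mathbb{Z}$ while $M$ is torsion-free. (This is consistent with the paper's Example 2.15: $\mathbb{Z}\oplus\mathbb{Z}$ does not have the SSP.) So as written your proposal establishes only one implication of the lemma.

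What rescues the implication ``$(1-e)fR\leq_{\oplus}R_{R}\Rightarrow eR+fR\leq_{\oplus}R_{R}$'' is a piece of structure you never invoke: $(1-e)fR$ sits inside $(1-e)R$, the canonical complement of $eR$. That is the paper's argument: since $(1-e)fR\subseteq(1-e)R$ and $(1-e)fR\leq_{\oplus}R_{R}$, the modular law gives $(1-e)fR\leq_{\oplus}(1-e)R$, say $(1-e)R=(1-e)fR\oplus K$; then
$R_{R}=eR\oplus(1-e)R=eR\oplus(1-e)fR\oplus K=(eR+fR)\oplus K$.
So the correct general statement is: if $A\oplus A'=M$ and $B\subseteq A'$ with $B\leq_{\oplus}M$, then $A\oplus B\leq_{\oplus}M$ — the hypothesis ``$B$ lies in a complement of $A$'' cannot be weakened to ``$A\cap B=0$.'' Adding this containment argument (or an equivalent idempotent computation) closes the gap; ironically, your ``main obstacle'' paragraph was spent on the direction that is trivial by transitivity, while the direction with real content was waved away.
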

\begin{proof} It is easy to prove that $eR+fR=eR\oplus(1-e)fR$. If $eR+fR
\leq_{\oplus}R_{R}$, then there exists a right ideal $T$ of $R$
such that $(eR+fR)\oplus T=R_{R}$. So $eR\oplus(1-e)fR\oplus
T=R_{R}$. This implies that  $(1-e)fR\leq_{\oplus}R_{R}$.
Conversely, assume $(1-e)fR\leq_{\oplus}R_{R}$. Since
$(1-e)fR\leq (1-e)R$, $(1-e)fR$ is also a direct summand of
$(1-e)R$. So there is a right ideal $K$ of $R$ such that
$(1-e)fR\oplus K=(1-e)R$. Thus
$R_{R}=eR\oplus(1-e)R=eR\oplus(1-e)fR\oplus K=(eR+fR)\oplus K$.
So $eR+fR \leq_{\oplus}R_{R}$.
\end{proof}

Recall that an element $a$ of $R$ is called {\it regular} if there
exists $b\in R$ such that $a=aba$. $R$ is called (\emph{von-Neumann}) \emph{regular} if every element of $R$ is regular. The following lemma is
well-known.

\begin{lem}\label{2.3}
Let a be an element of a ring R.
The following are equivalent.
\begin{enumerate}
\item $a$ is regular.
\item $aR\leq_{\oplus} R_{R}$.
\item $Ra\leq_{\oplus}$$ _{R}R$.
\end{enumerate}
\end{lem}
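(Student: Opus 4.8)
The plan is to prove the implications in the cycle $(1)\Rightarrow(2)\Rightarrow(1)$ together with the symmetric cycle $(1)\Rightarrow(3)\Rightarrow(1)$, using throughout the standard fact that a right ideal $I$ of $R$ satisfies $I\leq_{\oplus}R_{R}$ if and only if $I=eR$ for some idempotent $e=e^{2}\in R$ (and the left-handed dual for left ideals). This reduces everything to short idempotent manipulations.

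For $(1)\Rightarrow(2)$, assuming $a=aba$ I would put $e=ab$ and verify $e^{2}=abab=(aba)b=ab=e$, so $e$ is an idempotent. Then $e=ab\in aR$ gives $eR\subseteq aR$, while $a=aba=ea\in eR$ gives $aR\subseteq eR$; hence $aR=eR\leq_{\oplus}R_{R}$. For $(2)\Rightarrow(1)$, write $aR=eR$ with $e=e^{2}$. From $e\in aR$ we get $e=ab$ for some $b\in R$, and from $a\in eR=aR$ we get $a=ea$. Combining, $a=ea=(ab)a=aba$, so $a$ is regular. The equivalence $(1)\Leftrightarrow(3)$ follows by the mirror-image argument: if $a=aba$ then $f=ba$ is idempotent with $Rf\subseteq Ra$ and $a=af\in Rf$, so $Ra=Rf\leq_{\oplus}{}_{R}R$; conversely $Ra=Rf$ with $f=f^{2}$ yields $f=da$ and $a=af$, whence $a=af=ada$.

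Since each step is a one-line verification, there is no genuine obstacle here; the only ingredient worth stating explicitly is the idempotent-generator description of direct summands of $R_{R}$ (respectively ${}_{R}R$), which is classical, and the rest is bookkeeping. This lemma will then be used freely in the sequel to pass between regularity of elements and the direct-summand property of principal one-sided ideals.
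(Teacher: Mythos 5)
Your proof is correct, and all the verifications check out: $e=ab$ and $f=ba$ are idempotent when $a=aba$, the equalities $aR=eR$ and $Ra=Rf$ hold, and the converse directions recover $a=aba$ from $e=ab$, $a=ea$ (respectively $f=da$, $a=af$). The paper itself gives no proof of this lemma --- it is simply cited as well-known --- and your argument is exactly the standard one, using the classical fact that direct summands of $R_{R}$ (resp.\ ${}_{R}R$) are precisely the right (resp.\ left) ideals generated by an idempotent, which the paper also uses implicitly (e.g.\ in the proofs of Theorems 2.4 and 2.7).
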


The next result can also be derived from \cite[Proposition 2.2]{G89}. To be self-contained, we give the
direct proof through idempotents of rings.

\begin{thm}\label{thm 2.4}
 The following are equivalent for a ring R.
\begin{enumerate}
\item R is right SSP.
\item For any two idempotents e and f of R,
$efR\leq_{\oplus}R_{R}$.
\item R is left SSP.
\item For any two idempotents e and f of R,
$Ref\leq_{\oplus}$$_{R}R$.
\end{enumerate}
\end{thm}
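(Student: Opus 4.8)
The plan is to prove the four conditions equivalent by establishing (1)$\Leftrightarrow$(2), (2)$\Leftrightarrow$(4) and (3)$\Leftrightarrow$(4), using nothing beyond Lemma~\ref{lem 2.2 } and Lemma~\ref{2.3}. The left--right symmetry of the SSP condition, (1)$\Leftrightarrow$(3), then falls out for free by chaining these biconditionals.

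For (1)$\Leftrightarrow$(2), I would first recall the standard fact that every direct summand of $R_{R}$ has the form $eR$ with $e^{2}=e$ (split $1$ along the decomposition), so that ``$R$ is right SSP'' says exactly that $eR+fR\leq_{\oplus}R_{R}$ for every pair of idempotents $e,f$. By Lemma~\ref{lem 2.2 } this holds if and only if $(1-e)fR\leq_{\oplus}R_{R}$ for every pair $e,f$; and since $e\mapsto 1-e$ is a bijection of the set of idempotents of $R$ onto itself, the latter statement is the same as ``$gfR\leq_{\oplus}R_{R}$ for all idempotents $g,f$'', which is (2).

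For (2)$\Leftrightarrow$(4), fix idempotents $e,f$ and apply Lemma~\ref{2.3} to the element $a=ef$: noting that $Ref=R(ef)$, the lemma gives $efR\leq_{\oplus}R_{R}\iff ef$ is regular $\iff R(ef)\leq_{\oplus}{}_{R}R\iff Ref\leq_{\oplus}{}_{R}R$, and since this equivalence holds for each fixed pair, the universally quantified statements (2) and (4) coincide. Finally, (3)$\Leftrightarrow$(4) is the verbatim left-handed mirror of (1)$\Leftrightarrow$(2): using the left analogue of Lemma~\ref{lem 2.2 } (reprove it directly, or pass to $R^{\mathrm{op}}$), ``$R$ is left SSP'' means $Re+Rf\leq_{\oplus}{}_{R}R$ for all idempotents, which becomes $Rf(1-e)\leq_{\oplus}{}_{R}R$, i.e.\ $Rfg\leq_{\oplus}{}_{R}R$ for all idempotents $f,g$ once $1-e$ is allowed to range over all idempotents — and that is (4).

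I do not expect a genuine obstacle; the work is purely substitution into the two available lemmas. The only points needing a moment's care are the quantifier bookkeeping — that the family $\{(1-e)fR\}$ taken over all idempotents $e$ is literally the family $\{gfR\}$ taken over all idempotents $g$ — and reading $Ref$ as $R(ef)$ so that Lemma~\ref{2.3} applies on the nose; one should also note explicitly that the left version of Lemma~\ref{lem 2.2 } is being invoked, not merely the stated right version.
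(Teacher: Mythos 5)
Your proof is correct and follows essentially the same route as the paper: the paper also obtains (1)$\Leftrightarrow$(2) and (3)$\Leftrightarrow$(4) from Lemma~2.2 (and its left-handed mirror) and (2)$\Leftrightarrow$(4) from Lemma~2.3. You have merely written out the quantifier bookkeeping and the identification $Ref=R(ef)$ that the paper leaves as ``easy to show''.
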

\begin{proof}
By Lemma 2.2, it is easy to show that (1)$\Leftrightarrow$(2)
and (3)$\Leftrightarrow$(4). According to Lemma 2.3,  (2)$\Leftrightarrow$(4).
\end{proof}

By the above theorem, we have

 \begin{exa}\label{exa 2.5}
 {\rm Regular rings, abelian rings
 (especially commutative rings) are SSP. Recall that a ring $R$ is called abelian if its
  idempotents are contained in the center of $R$.}
  \end{exa}

  Since a ring $R$ is right SSP if and only if it is left SSP,  we will only consider SSP rings.

  \begin{lem}\label{lem 2.6}
  \cite[Lemma 19 (1)]{AH02}
  Let M be a C3 module. If M has the SIP, then M has the SSP.
  \end{lem}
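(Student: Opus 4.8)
The plan is to use the SIP to peel the intersection $A\cap B$ off from $A$, rewriting the sum $A+B$ as an internal direct sum to which the C3 hypothesis applies verbatim.

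First I would fix two arbitrary direct summands $A\leq_{\oplus}M$ and $B\leq_{\oplus}M$. By the SIP, $A\cap B\leq_{\oplus}M$; since $A\cap B\subseteq A$ and $A$ is itself a direct summand of $M$, the submodule $A\cap B$ is a direct summand of $A$. Choose $A'\subseteq A$ with $A=(A\cap B)\oplus A'$. By transitivity of the direct-summand relation (a direct summand of a direct summand of $M$ is a direct summand of $M$), we get $A'\leq_{\oplus}M$.

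Next I would verify the two elementary facts that drive the argument: (i) $A+B=A'+B$, because $A\cap B\subseteq B$, so the piece $A\cap B$ of $A$ is absorbed by $B$ and $A+B=(A\cap B)+A'+B=A'+B$; and (ii) $A'\cap B=0$, because $A'\cap B\subseteq A\cap B$ while $A'\cap(A\cap B)=0$ by the very choice of $A'$. Both are routine once the decomposition $A=(A\cap B)\oplus A'$ is in place.

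Finally, having $A'\leq_{\oplus}M$, $B\leq_{\oplus}M$, and $A'\cap B=0$, the C3 property gives $A'+B\leq_{\oplus}M$; by (i) this is precisely $A+B\leq_{\oplus}M$, so $M$ has the SSP. I do not anticipate a real obstacle: the only point needing a little care is the transitivity step that $A'$, a direct summand of the direct summand $A$, is again a direct summand of $M$, together with the bookkeeping in (i) and (ii).
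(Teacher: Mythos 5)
Your argument is correct: the modular-law step (a direct summand of $M$ contained in $A$ is a direct summand of $A$), the decomposition $A=(A\cap B)\oplus A'$, and the identities $A+B=A'+B$, $A'\cap B=0$ are all valid, and C3 then finishes the proof. The paper itself gives no proof of this lemma, quoting it from Alkan--Harmanci \cite[Lemma 19 (1)]{AH02}, and your argument is exactly the standard one behind that reference, so there is nothing further to compare.
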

\begin{thm}\label{thm 2.7}
The following are equivalent for a ring $R$.
\begin{enumerate}
\item $R$ is SSP.
\item $R$ is right C3 and right SIP.
\item $R$ is left C3 and left SIP.
\end{enumerate}
\end{thm}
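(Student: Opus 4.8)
The plan is to prove the implications (1)$\Rightarrow$(2), (1)$\Rightarrow$(3), (2)$\Rightarrow$(1) and (3)$\Rightarrow$(1). Two of these come for free from Lemma \ref{lem 2.6}: if $R_R$ is C3 and has the SIP, then $R_R$ has the SSP, so (2) implies that $R$ is right SSP, hence SSP by Theorem \ref{thm 2.4}; applying the same argument to $_{R}R$, (3) implies that $R$ is left SSP, hence again SSP. For the forward directions, recall from the introduction that a right SSP ring is right C3 and, by the left-right symmetry, a left SSP ring is left C3; since an SSP ring is both right and left SSP, it is simultaneously right C3 and left C3. Hence (1)$\Rightarrow$(2) and (1)$\Rightarrow$(3) will both follow once I show that an SSP ring is right SIP, the left SIP statement being the mirror image.

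To see that an SSP ring $R$ is right SIP, fix idempotents $e,f\in R$ and consider the left ideals $R(1-e)$ and $R(1-f)$; each is a direct summand of $_{R}R$ since $1-e$ and $1-f$ are idempotents. Because $R$ is left SSP (Theorem \ref{thm 2.4}), the sum $R(1-e)+R(1-f)$ is a direct summand of $_{R}R$, hence equals $Rg$ for some idempotent $g\in R$. Now I would invoke the elementary identities ${\bf r}\big(R(1-u)\big)=uR$ and ${\bf r}(Ru)=(1-u)R$, valid for every idempotent $u$, together with the fact that the right annihilator of a sum of left ideals is the intersection of the right annihilators:
\[
eR\cap fR={\bf r}\big(R(1-e)\big)\cap{\bf r}\big(R(1-f)\big)={\bf r}\big(R(1-e)+R(1-f)\big)={\bf r}(Rg)=(1-g)R.
\]
Therefore $eR\cap fR=(1-g)R\leq_{\oplus}R_R$, so $R$ is right SIP; by the left-right symmetry it is also left SIP, which completes (1)$\Rightarrow$(2) and (1)$\Rightarrow$(3).

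I expect the only real subtlety to be the annihilator bookkeeping in the middle step: one must recognise $eR\cap fR$ as the right annihilator of a \emph{left} ideal generated by $1-e$ and $1-f$, so that it is the left SSP hypothesis (not the right one) that applies, and then translate the idempotent generator $g$ of that left ideal back into the right direct summand $(1-g)R$. Once this duality is set up, no further computation is needed, and the remaining implications are immediate from Lemma \ref{lem 2.6} and Theorem \ref{thm 2.4}.
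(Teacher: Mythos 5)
Your proposal is correct and matches the paper's own argument: the easy directions come from Lemma \ref{lem 2.6} together with the left-right symmetry of Theorem \ref{thm 2.4}, and the key step is exactly the paper's computation $eR\cap fR={\bf r}\bigl(R(1-e)+R(1-f)\bigr)={\bf r}(g)=(1-g)R$ using the left SSP hypothesis. The only cosmetic difference is that you spell out the (1)$\Rightarrow$(3) side explicitly, which the paper dismisses as ``similar.''
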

\begin{proof} We only prove (1)$\Leftrightarrow$(2). (1)$\Leftrightarrow$(3) is similar.\\
(2)$\Rightarrow$(1) is obtained by Theorem 2.4 and Lemma 2.6.\\
For (1)$\Rightarrow$(2), since $R$ is SSP, $R$ is right C3. Now we show that $R$ is right SIP. Let $e$ and $f$ be two idempotents of $R$. Then
\begin{center}
$eR\cap fR={\bf r}(1-e)\cap{\bf r}(1-f)={\bf r}(R(1-e)+R(1-f))$.

\end{center}As $R$ is SSP, there exists an idempotent $g$ of $R$ such that
$R(1-e)+R(1-f)=Rg$. Thus $(eR\cap fR)={\bf
r}(g)=(1-g)R\leq_{\oplus}R_{R}$. So $R$ is right SIP.
\end{proof}

According to the above theorem, we give the following three  remarks.
\begin{rem}\label{rem 2.8}
{\rm A left SIP ring may not be right SIP.}
\end{rem}
\begin{proof}
By \cite[Corollary 2.6 (iii)]{G89}, a ring $R$ is left (right) semihereditary if and only if every ring which is Morita-equivalent to $R$ is left (right) SIP.
Since a left semihereditary ring may not be right semihereditary, a left SIP ring may not be right SIP.
\end{proof}

\begin{rem}\label{rem 2.9}
{\rm  A right SIP ring may not be SSP.}
\end{rem}
\begin{proof}
Let $R=\left(\begin{array}{cc}
K&K\\
0&K
\end{array}\right)$ be the ring of upper triangular matrices over a field $K$, $N=\left(\begin{array}{cc}
0&K\\
0&K
\end{array}\right)$, $L=\left(\begin{array}{cc}
K&K\\
0&0
\end{array}\right)$ left ideals of $R$ and let $M=R/L$. Set $U=N\oplus M$ and $S=End(_{R}U)$. By \cite[Remark, Page 81]{G89}, $S$ is two-sided SIP
but  not SSP
\end {proof}

\begin{rem}\label{rem 2.10}
A right C3 and left SIP ring may not be SSP.
\end{rem}
\begin{proof}
 Set $R=\left(\begin{array}{ccc}
\mathbb{F}_{2}&0&\mathbb{F}_{2}\\
0&\mathbb{F}_{2}&0\\
0&0&\mathbb{F}_{2}
\end{array}\right)$, where $\mathbb{F}_{2}$ is the field with only two elements 0 and 1. It is clear that $R$ is a ring. We show that $R$ is a right C3 and left SIP ring but not SSP. Let $e_{ij}$ denote the matrix units in $R$. The following are all nontrivial idempotents of $R$: $E_{1}=e_{11}$, $E_{2}=e_{22}$, $E_{3}=e_{33}$, $E_{4}=e_{11}+e_{22}$, $E_{5}=e_{11}+e_{33}$, $E_{6}=e_{22}+e_{33}$, $E_{7}=e_{11}+e_{13}$, $E_{8}=e_{33}+e_{13}$, $E_{9}=e_{11}+e_{22}+e_{13}$, $E_{10}=e_{22}+e_{33}+e_{13}$. Thus, all nontrivial right (left) direct summands of $R$ are $E_{n}R$ ($RE_{n}$), $n=1, \ldots, 10$. Through direct computations, $R$ is right C3 and left SIP. But $R$ is not SSP because $RE_{4}+RE_{7}$ is not a left direct summand of $R$.
\end{proof}

\begin{thm}\label{thm 2.11}
Let e be an idempotent of a ring R. If R is SSP, then eRe is also SSP.
\end{thm}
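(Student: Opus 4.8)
The plan is to reduce the statement to the two tools already available: the idempotent characterization of the SSP in Theorem \ref{thm 2.4} and the regularity criterion for direct summands in Lemma \ref{2.3}. First I would record the elementary description of idempotents in the corner ring $eRe$, whose identity element is $e$: the idempotents of $eRe$ are exactly those idempotents $g$ of $R$ with $eg=ge=g$. Indeed, if $g=g^{2}\in eRe$, then $g=ege$, so $eg=ge=g$; conversely such a $g$ is an idempotent of $R$ lying in $eRe$. The same applies to a second idempotent $h$ of $eRe$, and from $g=ege$, $h=ehe$ one gets the absorption identities $gh\,e=gh=e\,gh$.

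Next, take two idempotents $g$ and $h$ of $eRe$. They are in particular idempotents of $R$, so Theorem \ref{thm 2.4} applied to $R$ gives $ghR\leq_{\oplus}R_{R}$, and then Lemma \ref{2.3} produces $b\in R$ with $gh=ghbgh$, i.e. $gh$ is a regular element of $R$. The key step is to upgrade this to regularity inside the corner ring: set $b'=ebe\in eRe$ and compute, using $ghe=gh=egh$,
\[
ghb'gh=gh\cdot ebe\cdot gh=(ghe)b(egh)=ghbgh=gh .
\]
Hence $gh$ is a regular element of the ring $eRe$.

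Finally I would apply Lemma \ref{2.3} in the ring $eRe$ to conclude $gh(eRe)\leq_{\oplus}(eRe)_{eRe}$, and then Theorem \ref{thm 2.4} in the ring $eRe$ (its identity being $e$) to conclude that $eRe$ is right SSP; since right SSP and left SSP coincide by Theorem \ref{thm 2.4}, $eRe$ is SSP. The only point requiring care — and the one I would flag as the crux — is the passage from a regularity witness $b$ for $gh$ in $R$ to a witness $b'$ living in $eRe$; this rests entirely on the absorption identities $ghe=gh=egh$, which in turn hold because $g,h$ are idempotents of $eRe$ rather than arbitrary idempotents of $R$. Beyond this bookkeeping there is no genuine obstacle.
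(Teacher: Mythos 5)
Your proof is correct, and it takes a route that differs from the paper's in its transfer mechanism. The paper argues straight from the definition: given idempotents $f,g$ of $S=eRe$, the SSP of $R$ produces an idempotent $h$ of $R$ with $fR+gR=hR$, and then a direct computation ($h=eh$, so $ehe$ is an idempotent of $S$ and $fS+gS=(fR+gR)e=ehRe=eheS$) exhibits an explicit idempotent generator of the sum inside $S$. You instead route everything through the product characterization of Theorem 2.4 and the regularity criterion of Lemma 2.3: the product $gh$ of two corner idempotents generates a direct summand of $R_{R}$, hence is von Neumann regular in $R$, and compressing the witness to $b'=ebe$ — legitimate precisely because $egh=gh=ghe$ — shows $gh$ is regular in $eRe$, after which Lemma 2.3 and Theorem 2.4 applied inside the corner ring (identity $e$) finish the argument. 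Both proofs share the skeleton "idempotents of $eRe$ are idempotents of $R$, apply the hypothesis on $R$, pull the conclusion down to the corner," but what you gain is a cleaner, reusable statement — an element of $eRe$ that is regular in $R$ is regular in $eRe$ — and you avoid the sum bookkeeping; what the paper's computation gains is that it is self-contained at the level of the definition and explicitly displays the idempotent $ehe$ generating $fS+gS$. Your flagged crux (cornering the regularity witness via the absorption identities) is exactly the right point to be careful about, and it is handled correctly.
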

\begin{proof} Suppose $f$ and $g$ are two idempotents of $S=eRe$. We show that $fS+gS\leq_{\oplus}S_{S}$. It is clear that  $f$ and
$g$ are  idempotents of $R$. Since $R$ is SSP, there exists
an idempotent $h$ of $R$ such that $fR+gR=hR$. As $f$, $g\in
S$, $h=eh$. Hence $(ehe)^{2}=eheehe=eh^{2}e=ehe\in S$. This
shows that $ehe$ is an idempotent of $S$. Then
\begin{center}
$fS+gS=feRe+geRe=fRe+gRe=(fR+gR)e=(hR)e=ehRe$.
\end{center} Since $ehRe=hRe=h^{2}Re=eheehRe\subseteq ehe(eRe)\subseteq ehRe$,
$fS+gS=eheS$. Thus, $S$ is an SSP ring.
 \end{proof}

It is easy to see that an SSP ring may not be regular. But when the rings are $n\times n$ matrix ring in which $n>1$, the two definitions are equivalent by the following Theorem 2.14.
 \begin{lem}\label{lem 2.12}
 \cite[Corollary 2.4 (i)]{G89}
Let M be a right $R$-module, S=End($M_{R}$). If M is quasi-projective, then M has the SSP if and only if
S is SSP.
\end{lem}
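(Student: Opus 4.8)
The plan is to transfer the SSP condition back and forth between $M$ and $S=\mathrm{End}(M_R)$ through the idempotents of $S$, in the same spirit as Theorem 2.4. Two standard facts drive the argument: (a) a submodule $N\leq M$ satisfies $N\leq_{\oplus}M$ exactly when $N=eM$ for an idempotent $e\in S$ (take $e$ the projection of $M$ onto $N$); and (b) the module analogue of Lemma 2.2, proved verbatim from $eM+fM=eM\oplus(1-e)fM$, which says that for idempotents $e,f\in S$ one has $eM+fM\leq_{\oplus}M$ if and only if $(1-e)fM\leq_{\oplus}M$. Using (a) and (b), $M$ has the SSP if and only if $(1-e)fM\leq_{\oplus}M$ for all idempotents $e,f\in S$; and by the definition together with Lemma 2.2 applied to the ring $S$, $S$ is right SSP if and only if $(1-e)fS\leq_{\oplus}S_S$ for all idempotents $e,f\in S$, which by Theorem 2.4 is the same as $S$ being SSP. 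Hence it is enough to prove, for $g=(1-e)f$ (indeed for an arbitrary $g\in S$), that $gM\leq_{\oplus}M$ if and only if $gS\leq_{\oplus}S_S$.

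The implication from $gS\leq_{\oplus}S_S$ to $gM\leq_{\oplus}M$ needs no hypothesis on $M$: by Lemma 2.3, $g=gbg$ for some $b\in S$, so $gb$ is an idempotent of $S$ with $(gb)M=gM$, whence $gM\leq_{\oplus}M$. For the converse I would use quasi-projectivity. Write $gM=hM$ with $h^{2}=h\in S$; then $h$ is the identity on $\mathrm{im}(g)$, so $hg=g$, and $g$ induces an isomorphism $M/\ker g\xrightarrow{\ \sim\ }hM$. Viewing $h$ as a homomorphism $M\to hM\cong M/\ker g$ and invoking the definition of quasi-projectivity to lift it along the canonical projection $M\to M/\ker g$ (equivalently along $g\colon M\twoheadrightarrow hM$), I obtain $k\in S$ with $gk=h$. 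Then $gkg=hg=g$, so $g$ is a regular element of $S$ and $gS\leq_{\oplus}S_S$ by Lemma 2.3. Putting the pieces together: $M$ is SSP $\iff$ $(1-e)fM\leq_{\oplus}M$ for all idempotents $e,f\in S$ $\iff$ $(1-e)fS\leq_{\oplus}S_S$ for all idempotents $e,f\in S$ $\iff$ $S$ is SSP.

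The single genuinely nontrivial step is producing the lift $k$ with $gk=h$; this is exactly where — and the only place where — quasi-projectivity is used, so it is the point to set up carefully: one must recognize $g$ as (up to the isomorphism above) the canonical map $M\to M/\ker g$ and $h$ as a homomorphism into that same quotient. The remaining ingredients are only the bookkeeping around Lemma 2.2 and Lemma 2.3 and the elementary observation that a direct summand of $M$ lying inside a direct summand $(1-e)M$ is a direct summand of $(1-e)M$ (used, with its analogue over $S$, to pass between ``$(1-e)fM\leq_{\oplus}M$'' and ``$eM+fM\leq_{\oplus}M$''); all of this is routine.
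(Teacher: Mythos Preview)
The paper does not supply a proof of this lemma; it is simply quoted from \cite[Corollary 2.4(i)]{G89} and used as a black box. There is therefore nothing in the paper to compare your argument against.

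That said, your proof is correct and self-contained, and it meshes well with the paper's idempotent-based treatment of SSP (Lemmas~2.2--2.3 and Theorem~2.4). You reduce both ``$M$ has SSP'' and ``$S$ is SSP'' to the single condition that $(1-e)f$ is regular in $S$ for all idempotents $e,f\in S$, and then prove for arbitrary $g\in S$ that $gM\leq_{\oplus}M$ if and only if $g$ is regular in $S$. The backward direction is immediate from Lemma~2.3; the forward direction is where quasi-projectivity enters, and your lifting argument (producing $k$ with $gk=h$, hence $gkg=hg=g$) is exactly the standard way to exploit it. One small point worth making explicit in a write-up: the lifting property you invoke is quasi-projectivity in the form ``for every epimorphism $M\to N$ and every map $M\to N$ there is a lift $M\to M$'', applied with $N=hM$; this is equivalent to the quotient formulation you sketch, but stating it this way avoids having to pass through $M/\ker g$.
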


\begin{lem}\label{lem 2.13}
\cite[Proposition 2.8]{G89}
Let M be a quasi-projective right R-module, S=End($M_{R}$) is regular if and only if $M^{2}$ has the SSP.
\end{lem}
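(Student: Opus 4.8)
The plan is to transport the statement to the matrix ring $\mathbb{M}_{2}(S)$ and there use the characterisation of the SSP in terms of regular elements. First I would note that, since $M$ is quasi-projective, so is $M^{2}=M\oplus M$: by the standard criterion a finite direct sum $\bigoplus_{i=1}^{n}M_{i}$ is quasi-projective exactly when each $M_{i}$ is $M_{j}$-projective, and for two copies of $M$ every such condition is just ``$M$ is $M$-projective'', i.e. $M$ quasi-projective. Hence Lemma 2.12 applies to $M^{2}$ and, using $\mathrm{End}((M^{2})_{R})\cong\mathbb{M}_{2}(S)$, turns the assertion into: $S$ is regular if and only if $\mathbb{M}_{2}(S)$ is SSP. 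Finally, Theorem 2.4 together with Lemma 2.3 tells us that $\mathbb{M}_{2}(S)$ is SSP if and only if $ef$ is a regular element of $\mathbb{M}_{2}(S)$ for every pair of idempotents $e,f$ of $\mathbb{M}_{2}(S)$.

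One implication is then immediate: if $S$ is regular then $\mathbb{M}_{2}(S)$ is regular (regularity is inherited by matrix rings), hence $\mathbb{M}_{2}(S)$ is SSP by Example 2.5, and so $M^{2}$ has the SSP by Lemma 2.12.

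For the converse, assume $M^{2}$ has the SSP, so $\mathbb{M}_{2}(S)$ is SSP, and let $a\in S$ be arbitrary; I must produce $b\in S$ with $aba=a$. Consider the elements
\[
e=\begin{pmatrix}0&0\\0&1\end{pmatrix},\qquad f=\begin{pmatrix}1&0\\a&0\end{pmatrix}
\]
of $\mathbb{M}_{2}(S)$; a one-line check gives $e^{2}=e$ and $f^{2}=f$, and their product is $ef=\begin{pmatrix}0&0\\a&0\end{pmatrix}$. By the SSP hypothesis and Lemma 2.3, $ef$ is a regular element, so there is $Y=(y_{ij})\in\mathbb{M}_{2}(S)$ with $(ef)Y(ef)=ef$. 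Reading off the $(2,1)$-entry of this identity gives $ay_{12}a=a$, so $a$ is regular with $b=y_{12}$. As $a$ was arbitrary, $S$ is regular, which finishes the equivalence.

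The soft point to get right is the passage from $M$ to $M^{2}$: one must invoke the correct ``finite direct sums of copies'' version of the quasi-projectivity criterion so that Lemma 2.12 genuinely applies to $M^{2}$ and not merely to $M$. Beyond that, the only real content is the last computation, which is the routine device for recovering regularity of an element of $S$ from regularity of the corresponding element of $\mathbb{M}_{2}(S)$; the verifications that the displayed $2\times 2$ matrices are idempotent and have the claimed product are trivial bookkeeping.
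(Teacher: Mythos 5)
Your proof is correct, but note that the paper itself offers no argument for this lemma at all: it is quoted verbatim from Garcia \cite[Proposition 2.8]{G89}, so there is nothing internal to compare against except the surrounding machinery. What you have done is supply a self-contained derivation from the paper's own toolkit: you reduce the statement via Lemma 2.12 to ``$S$ regular $\Leftrightarrow$ $\mathbb{M}_{2}(S)$ is SSP'', prove the forward direction from Example 2.5, and get the converse from Theorem 2.4 and Lemma 2.3 together with the explicit idempotents $e=\left(\begin{smallmatrix}0&0\\0&1\end{smallmatrix}\right)$, $f=\left(\begin{smallmatrix}1&0\\a&0\end{smallmatrix}\right)$, whose product $ef=\left(\begin{smallmatrix}0&0\\a&0\end{smallmatrix}\right)$ being regular forces $ay_{12}a=a$; the computation checks out. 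You also correctly isolate and settle the one genuinely delicate point, namely that $M^{2}$ inherits quasi-projectivity from $M$ (true for finite direct sums of copies of one module, by the criterion that $M\oplus M$ is quasi-projective iff $M$ is $M$-projective), without which Lemma 2.12 could not be applied to $M^{2}$. Importantly, your argument does not lean on Theorem 2.14, which in the paper is itself deduced from this lemma, so there is no circularity; the net effect is that your route replaces an external citation by an internal two-page-lemma proof, essentially reproving the $n=2$ case of the matrix-ring characterization of regularity at the level of $\mathrm{End}(M^{2})$.
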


\begin{thm}\label{thm 2.14}
 The following are equivalent for a ring $R$.
 \begin{enumerate}
 \item R is  regular.
 \item $\mathbb{M}_{2}(R)$ is  SSP.
 \item $\mathbb{M}_{n}(R)$ is  SSP for some $n> 1$.
 \item $\mathbb{M}_{n}(R)$ is  SSP for any $n> 1$
 \end{enumerate}

\end{thm}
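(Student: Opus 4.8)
The plan is to prove the cycle $(1)\Rightarrow(4)\Rightarrow(3)\Rightarrow(2)\Rightarrow(1)$, using the two endomorphism-ring lemmas (Lemma 2.12 and Lemma 2.13) as the main engine. The key observation is that $\mathbb{M}_{n}(R)\cong\operatorname{End}(R^{n}_{R})$, so the SSP property of the matrix ring translates directly into an SSP statement about the free module $R^{n}$, which in turn (being quasi-projective, indeed projective) is controlled by Lemma 2.13 whenever $n\ge 2$.

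For $(1)\Rightarrow(4)$: assume $R$ is regular and fix $n>1$. Since $R$ is regular, so is $\mathbb{M}_{n}(R)$ (a standard fact — or one may argue directly that $R^{n}$ is a semisimple-like module in the sense that every submodule generated by an idempotent split off, but the cleanest route is just to cite regularity of matrix rings over regular rings). By Lemma 2.2's Example 2.5, regular rings are SSP, so $\mathbb{M}_{n}(R)$ is SSP. Alternatively, and more in the spirit of the paper, write $R^{n}=(R)^{2}\oplus R^{n-2}$ if one wants to invoke Lemma 2.13 — but since $\operatorname{End}(R_R)=R$ is regular, Lemma 2.13 applied with $M=R$ gives that $R^{2}$ has the SSP, hence $\operatorname{End}(R^{2}_{R})=\mathbb{M}_{2}(R)$ is SSP by Lemma 2.12; this already covers $(1)\Rightarrow(2)$, and the general $n$ follows the same way since $\operatorname{End}(R_R)$ is regular and $R^{n}$ with $n\ge 2$ contains $R^{2}$ as the module one needs. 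The implications $(4)\Rightarrow(3)$ and $(3)\Rightarrow(2)$: the first is trivial; for the second, if $\mathbb{M}_{n}(R)$ is SSP for some $n\ge 2$, then writing $n=2+k$ we have $R^{n}=R^{2}\oplus R^{k}$ carries the SSP (by Lemma 2.12, since $\mathbb{M}_n(R)=\operatorname{End}(R^n_R)$ is SSP and $R^n$ is quasi-projective), and the SSP passes to the direct summand $R^{2}$ — here one needs that the SSP is inherited by direct summands, which for $R^2 \le_\oplus R^n$ follows because a decomposition of $R^2$ extends to $R^n$; then $\mathbb{M}_{2}(R)=\operatorname{End}(R^{2}_{R})$ is SSP by Lemma 2.12 again.

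For the crucial implication $(2)\Rightarrow(1)$: assume $\mathbb{M}_{2}(R)$ is SSP. Apply Lemma 2.12 with $M=R^{2}_{R}$, which is projective hence quasi-projective: since $\operatorname{End}(R^{2}_{R})\cong\mathbb{M}_{2}(R)$ is SSP, $R^{2}$ has the SSP. Now apply Lemma 2.13 with $M=R_{R}$: since $M^{2}=R^{2}$ has the SSP and $R$ is quasi-projective, $\operatorname{End}(R_{R})=R$ is regular. This closes the cycle.

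The main obstacle I expect is the bookkeeping in $(3)\Rightarrow(2)$, namely verifying that the SSP descends from $\mathbb{M}_{n}(R)$ to $\mathbb{M}_{2}(R)$ for $2\le 2<n$. The clean way to handle this is entirely module-theoretic: one shows that if a module $N$ has the SSP then so does any direct summand $N'$ of $N$ (given summands $A,B\le_\oplus N'$, they are summands of $N$, so $A+B\le_\oplus N$; intersecting the splitting with $N'$ shows $A+B\le_\oplus N'$), then applies this with $N=R^{n}$ and $N'=R^{2}$, and finally translates back via Lemma 2.12. If one prefers to avoid even this small lemma, one can instead route everything through regularity: $(3)$ gives, via Lemmas 2.12 and 2.13 applied with $M=R^{n-1}$ when $n\ge 3$ (or $M=R$ when $n=2$), that $\operatorname{End}(R^{n-1}_R)=\mathbb{M}_{n-1}(R)$ is regular — wait, more directly, Lemma 2.13 with $M=R$ needs $M^2=R^2$ to have SSP, which we must extract from $\mathbb{M}_n(R)$ being SSP; so the summand-inheritance step seems genuinely needed and is the one point deserving care. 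Everything else is a routine translation between matrix rings and endomorphism rings of free modules.
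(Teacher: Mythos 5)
Your proposal is correct and follows essentially the same route as the paper: regularity of matrix rings plus Example 2.5 for $(1)\Rightarrow(4)\Rightarrow(3)$, Lemma 2.12 together with inheritance of the SSP by direct summands (which the paper cites as \cite[Proposition 1.2]{G89} and you verify via the modular law) for $(3)\Rightarrow(2)$, and Lemmas 2.12 and 2.13 applied to $M=R^{2}_{R}$ and $M=R_{R}$ for $(2)\Rightarrow(1)$. No gaps; the summand-inheritance step you flagged is exactly the point the paper handles by citation.
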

\begin{proof}
Since a matrix ring over a regular ring is also regular, (1)$\Rightarrow$(2) and (1)$\Rightarrow$(4)$\Rightarrow$(3). It is obvious that End($R_{R}^{n}$)$\cong\mathbb{M}_{n}(R), \forall n\geq 1$. By the above two lemmas, (2)$\Rightarrow$(1). Because a direct summand of a module with the SSP also has the SSP (see \cite[Proposition 1.2]{G89}), (3)$\Rightarrow$(2) is obtained by Lemma 2.12.
\end{proof}

 The following  example shows that SSP is not a Morita invariant.

\begin{exa} \label{exa 2.15}
{\rm The ring of integers $\mathbb{Z}$ is an SSP ring, but
M$_{2\times2}$($\mathbb{Z}$) is not SSP.}
\end{exa}
\begin{proof}
Since $\mathbb{Z}$ is commutative, by Example 2.5, $\mathbb{Z}$ is SSP. But M$_{2\times2}$($\mathbb{Z}$) is not SSP. If it is SSP, by the above theorem,  $\mathbb{Z}$ is regular. This is a contradiction.
\end{proof}

 At last, we give a new connection between semisimple rings and SSP rings. The next lemma was obtained by Yiqiang Zhou. To be completeness, we show the proof. Recall that an $R$-module $M$ is called a \emph{C2} module if every submodule that is isomorphic to a direct summand of $M$ is itself a direct summand of $M$.
\begin{lem}\label{lem 2.16}
{\rm (Zhou)}
Let R be a ring and  M a right $R$-module. If the direct sum $M\oplus M$ is a C3 module, then $M$ is a C2 module.
\end{lem}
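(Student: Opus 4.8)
The plan is to prove that if $M \oplus M$ is C3, then $M$ is C2, by translating the C2 condition into a statement about two internal direct summands of $M \oplus M$ that intersect trivially. Suppose $N \leq M$ and $\varphi\colon P \xrightarrow{\sim} N$ is an isomorphism, where $P \leq_\oplus M$, say $M = P \oplus Q$. The goal is to show $N \leq_\oplus M$. The natural device is to look at the "graph" of $-\varphi^{-1}$ (or of $\varphi$) inside $M \oplus M$: consider the submodule $D = \{\,(\varphi(p), p) : p \in P\,\} \subseteq M \oplus M$. This $D$ is isomorphic to $P$, hence is a direct summand of the direct summand $P \oplus P$ of $M \oplus M$, so $D \leq_\oplus M \oplus M$. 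On the other hand, the submodule $M \oplus Q$ is obviously a direct summand of $M \oplus M$ (complement $0 \oplus P$).

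The key computation is then to check that $D \cap (M \oplus Q) = 0$: an element $(\varphi(p), p)$ lies in $M \oplus Q$ iff $p \in Q$, but $p \in P$ and $P \cap Q = 0$ force $p = 0$. Since $M \oplus M$ is C3, we conclude $D + (M \oplus Q) = D \oplus (M \oplus Q) \leq_\oplus M \oplus M$. The remaining step is to extract from this that $N$ is a direct summand of $M$. Projecting $D \oplus (M \oplus Q)$ onto the first coordinate: its image contains $\varphi(P) = N$ (from $D$) and also all of $M$ (from $M \oplus Q$), so the first-coordinate projection is all of $M$; I need to see that a complement of $D \oplus (M\oplus Q)$ in $M \oplus M$ produces, via the appropriate projection, a complement of $N$ in $M$. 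More carefully, write $M \oplus M = \big(D \oplus (M \oplus Q)\big) \oplus E$ for some submodule $E$. Since the second coordinate of $D \oplus (M \oplus Q)$ is already all of $M$ (it contains $0 \oplus Q$ and $0 \oplus p$ for the $P$-part coming from $D$, hence $0\oplus M$), we get $E \subseteq M \oplus 0$; write $E = E' \oplus 0$. Then comparing first coordinates gives $M = N + Q + E'$; and to get directness one checks $N \cap (Q + E') = 0$ using that $(D \oplus (M\oplus Q)) \cap E = 0$ together with the fact that an element of $N \cap(Q+E')$, say $n = q + e'$ with $n = \varphi(p)$, yields $(\varphi(p), p) - (0, p) = (\varphi(p), 0) = (n,0) \in E \cap (D \oplus (M\oplus Q))$... here I must be careful because $(0,p)$ with $p \in P$ is not obviously in $M \oplus Q$ — so the cleaner route is: $(n,0) = (q,0) + (e',0)$ where $(q,0) \in M\oplus Q$ and $(e',0) = (n-q,0)$; but also $(n,0) = (\varphi(p),0)$ and $(\varphi(p),p) \in D$, so $(n, -p)$... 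I will instead argue directly that the projection $\pi\colon M\oplus M \to M\oplus 0$ along $0 \oplus M$ restricts to an isomorphism $E \xrightarrow{\sim} \pi(E)$ and that $\pi(E)$ is a complement to $N$ in $M$, using that $\ker(\pi|_{D\oplus(M\oplus Q)})$-type bookkeeping makes the sum direct.

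The main obstacle I anticipate is precisely this last bookkeeping: converting the direct-summand decomposition of $M \oplus M$ into a clean direct-summand decomposition of $M$ with $N$ as a factor. The cleanest formulation is to exhibit an explicit idempotent. If $M \oplus M = \big(D \oplus (M\oplus Q)\big) \oplus E$ and $e\colon M\oplus M \to M\oplus M$ is the projection onto $D \oplus (M \oplus Q)$ with kernel $E$, then restricting attention to the first summand $M \oplus 0$ and composing with the first-coordinate projection should yield an idempotent endomorphism of $M$ whose image is exactly $N$; verifying idempotency and that the image is $N$ (not something larger) is where the $P\cap Q = 0$ and the injectivity of $\varphi$ get used. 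I expect this to be a short diagram chase once set up correctly, so the real content of the proof is the observation that the graph $D$ of $\varphi$ is a summand of $P\oplus P \leq_\oplus M\oplus M$ and meets $M \oplus Q$ trivially, after which C3 does the work.
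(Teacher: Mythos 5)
Your overall strategy (put the graph of the isomorphism inside $M\oplus M$, intersect it trivially with a second summand, and let C3 do the work) is exactly the paper's, but two steps in your execution break down. First, your justification that $D=\{(\varphi(p),p):p\in P\}$ is a direct summand of $M\oplus M$ is wrong: $D$ is not contained in $P\oplus P$ (its first coordinates lie in $N$, an arbitrary submodule of $M$), and ``isomorphic to $P$, hence a summand'' is precisely the C2 property you are trying to prove. The claim itself is true, but for a different reason: $D\cap(M\oplus 0)=0$ and $D+(M\oplus 0)=M\oplus P$, so $D\oplus(M\oplus 0)=M\oplus P\leq_{\oplus}M\oplus M$, using only that $P\leq_{\oplus}M$ and that $\varphi$ maps into $M$.

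Second, and fatally for the extraction step you admit is incomplete, the companion summand $M\oplus Q$ is too big: $D+(M\oplus Q)$ is already all of $M\oplus M$ (given $(a,b)$, write $b=p+q$ with $p\in P$, $q\in Q$ and subtract $(\varphi(p),p)$), so applying C3 yields nothing, your complement $E$ is $0$, and the asserted ``comparing first coordinates gives $M=N+Q+E'$'' is a non sequitur, since the first coordinate of $M\oplus Q$ is all of $M$, not $Q$. Indeed the output your bookkeeping would have to produce is $M=N\oplus Q$, which is false in general: take $N=Q$ with $Q\cong P$. The repair is to choose the companion $0\oplus P$ instead: it is a summand of $M\oplus M$, and $D\cap(0\oplus P)=0$ because $\varphi$ is injective, so C3 gives $D+(0\oplus P)=N\oplus P\leq_{\oplus}M\oplus M$; then $N\oplus 0\leq_{\oplus}N\oplus P\leq_{\oplus}M\oplus M$, and since $N\oplus 0\subseteq M\oplus 0\leq_{\oplus}M\oplus M$, it follows that $N\leq_{\oplus}M$. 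This is exactly the paper's proof (written there with the graph $K'=\{(x,f(x))\}$ of the isomorphism from the submodule to the summand, i.e.\ of $\varphi^{-1}$ in your notation, and companion $0\oplus L$); no idempotent computation or projection bookkeeping is needed.
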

\begin{proof}
Assume $K$ is a submodule of $M$ that is isomorphic to a direct summand $L$ of $M$.  We want to show that $K$ is also a direct summand of $M$. Let $f$ be the isomorphism from $K$ to $L$. Set $K'=\{(x, f(x)): x\in K\}$,  $L'=0\oplus L$ and $M'=M\oplus 0$. Then $K'+M'=M\oplus L$ is a direct summand of $M\oplus M$. Since $K'\cap M'=0$, $K'$ is also a direct summand of $M\oplus M$. It is clear that  $K'\cap L'=0$ and $L'$ is a direct summand of $M\oplus M$. Since $M\oplus M$ is a C3 module. $K'+L'=K\oplus L$ is a direct summand of $M\oplus M$. As $K\oplus 0$ is a direct summand of $K\oplus L$, $K\oplus 0$ is also a direct summand of $M\oplus M$. This shows that $K\oplus 0$ is a direct summand of $M\oplus 0$. Now it is clear that $K$ is a direct summand of $M$.
\end{proof}
\begin{thm}\label{thm 2.17}
The following are equivalent for a ring R.
\begin{enumerate}
\item R is  semisimple.
\item $\mathbb{C}\mathbb{F}\mathbb{M}_{\Lambda}(R)$ is SSP for a countably infinite set $\Lambda$.
\item $\mathbb{C}\mathbb{F}\mathbb{M}_{\Lambda}(R)$ is  SSP for some infinite set $\Lambda$.
\item $\mathbb{C}\mathbb{F}\mathbb{M}_{\Lambda}(R)$ is  SSP for any infinite set $\Lambda$.
\end{enumerate}
\end{thm}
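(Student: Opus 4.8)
The aim is to prove the cycle $(1)\Rightarrow(4)\Rightarrow(2)\Rightarrow(3)\Rightarrow(1)$; all of it is routine except the last implication. Throughout one uses the standard identification $\mathbb{C}\mathbb{F}\mathbb{M}_{\Lambda}(R)\cong$ End$(R^{(\Lambda)}_{R})$ (an $R$-endomorphism of the free module $R^{(\Lambda)}$ is precisely a column-finite $\Lambda\times\Lambda$ matrix over $R$, with composition corresponding to matrix multiplication) together with Lemma 2.12, which applies since $R^{(\Lambda)}$ is free, hence projective, hence quasi-projective; thus $\mathbb{C}\mathbb{F}\mathbb{M}_{\Lambda}(R)$ is SSP if and only if the module $R^{(\Lambda)}$ has the SSP. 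For $(1)\Rightarrow(4)$: if $R$ is semisimple then every right $R$-module is semisimple, so every submodule of every right $R$-module is a direct summand; in particular $R^{(\Lambda)}$ has the SSP for every $\Lambda$, whence $\mathbb{C}\mathbb{F}\mathbb{M}_{\Lambda}(R)$ is SSP for every infinite $\Lambda$. The implications $(4)\Rightarrow(2)$ and $(2)\Rightarrow(3)$ are immediate.

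The content is $(3)\Rightarrow(1)$. Suppose $\mathbb{C}\mathbb{F}\mathbb{M}_{\Lambda}(R)$ is SSP for some infinite $\Lambda$, so $M:=R^{(\Lambda)}$ has the SSP. Since $\Lambda$ is infinite it contains a countably infinite subset, so $R^{(\mathbb{N})}$ is isomorphic to a direct summand of $M$; as a direct summand of a module with the SSP again has the SSP (\cite[Proposition 1.2]{G89}), it suffices to show $R$ is semisimple assuming only that $N:=R^{(\mathbb{N})}$ has the SSP. Note $N\oplus N=R^{(\mathbb{N})}\oplus R^{(\mathbb{N})}\cong R^{(\mathbb{N})}=N$. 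From this I would extract two facts. First, $N^{2}\cong N$ has the SSP, so Lemma 2.13 gives that End$(N_{R})\cong\mathbb{C}\mathbb{F}\mathbb{M}_{\mathbb{N}}(R)$ is von-Neumann regular; hence so is its corner ring $R\cong e_{11}\mathbb{C}\mathbb{F}\mathbb{M}_{\mathbb{N}}(R)e_{11}$ (if $S$ is regular and $e=e^{2}\in S$ then $eSe$ is regular, since $xyx=x$ with $y\in S$ forces $x(eye)x=x$ for $x\in eSe$). Second, $N$ has the SSP, hence is C3; since $N\oplus N\cong N$, the module $N\oplus N$ is C3 as well, so Zhou's Lemma 2.16 shows that $N$ is C2.

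It remains to prove that a von-Neumann regular ring $R$ for which $R^{(\mathbb{N})}$ is a C2 module is semisimple. It is well known that a von-Neumann regular ring of finite right uniform dimension is semisimple, so it is enough to show $R_{R}$ has finite uniform dimension. Suppose it does not. Then $R_{R}$ contains an infinite independent family of nonzero right ideals, and replacing a nonzero element $x$ of each member by the idempotent generating $xR$ (Lemma 2.3, legitimate since $R$ is regular) we obtain nonzero idempotents $g_{1},g_{2},\dots$ with $\bigoplus_{n\geq1}g_{n}R\subseteq R_{R}$ a direct sum. Put $P:=\bigoplus_{n\geq1}g_{n}R$ and write $N=\bigoplus_{n\geq1}e_{n}R$ with each $e_{n}R\cong R_{R}$. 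On the one hand $\bigoplus_{n\geq1}e_{n}g_{n}R$ is a direct summand of $N$ (with complement $\bigoplus_{n\geq1}e_{n}(1-g_{n})R$) and is isomorphic to $P$; on the other hand $P$, sitting inside the first summand $e_{1}R$, is a submodule of $N$. The C2 property of $N$ now yields $P\leq_{\oplus}N$, and since $P\subseteq e_{1}R\leq_{\oplus}N$ the modular law gives $P\leq_{\oplus}e_{1}R\cong R_{R}$. So $R_{R}=Q\oplus\bigoplus_{n\geq1}g_{n}R$ for some right ideal $Q$. Writing $1=q+\sum_{n}p_{n}$ with $q\in Q$, $p_{n}\in g_{n}R$ and only finitely many $p_{n}$ nonzero, we get $1\in Q\oplus g_{1}R\oplus\cdots\oplus g_{k}R$ for some $k$; this sub-sum is then all of $R_{R}$, forcing $g_{n}R=0$ for $n>k$, contradicting $g_{n}\neq0$. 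Hence $R_{R}$ has finite uniform dimension, and $R$ is semisimple.

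The main obstacle is $(3)\Rightarrow(1)$, and its heart is the interaction of two results from the excerpt: Lemma 2.13 turns ``$R^{(\mathbb{N})}$ has the SSP'' into ``$R$ is von-Neumann regular'', and Zhou's Lemma 2.16 strengthens it to ``$R^{(\mathbb{N})}$ is C2''. The C2 property is exactly what allows one to convert an infinite independent family of direct summands of $R_{R}$ into a genuine infinite direct-summand decomposition of $R_{R}$, which is impossible. Some routine care is also needed in the reduction from an arbitrary infinite $\Lambda$ to $\Lambda=\mathbb{N}$, in the identification $\mathbb{C}\mathbb{F}\mathbb{M}_{\Lambda}(R)\cong$ End$(R^{(\Lambda)}_{R})$, and in the (standard) fact that a von-Neumann regular ring of finite uniform dimension is semisimple.
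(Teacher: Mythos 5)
Your proposal is correct, and the outer structure (the identification $\mathbb{C}\mathbb{F}\mathbb{M}_{\Lambda}(R)\cong \operatorname{End}(R_R^{(\Lambda)})$, Lemma 2.12, the reduction to $R^{(\mathbb{N})}$, Lemma 2.13 to get regularity, and Zhou's Lemma 2.16 to get that $R^{(\mathbb{N})}$ is C2) matches the paper. The divergence is in how the C2 property is exploited to finish. The paper follows Bass: it shows $R$ is right perfect by taking a descending chain $Ra_1\supseteq Ra_2a_1\supseteq\cdots$, forming the free submodule $G$ spanned by $x_i-x_{i+1}a_i$ inside $F=R^{(\mathbb{N})}$, using C2 to conclude $G\leq_{\oplus}F$, and invoking \cite[Lemmas 28.1, 28.2, Theorem 28.4]{AF92}; semisimplicity then follows from ``regular $+$ right perfect''. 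You instead use C2 to rule out an infinite independent family of nonzero right ideals: you embed $P=\bigoplus_n g_nR$ both as a submodule of one coordinate $e_1R$ and as a direct summand $\bigoplus_n e_ng_nR$ spread across the coordinates, so C2 plus the modular law forces $P\leq_{\oplus}R_R$, which the ``$1$ has finite support'' argument contradicts; you then appeal to ``regular $+$ finite right uniform dimension $\Rightarrow$ semisimple''. Both routes are sound. Yours trades the perfect-ring machinery of \cite[\S 28]{AF92} for a self-contained direct-sum argument, at the cost of quoting (without proof) the standard fact that a von Neumann regular ring of finite right uniform dimension is semisimple; the paper's route leans more heavily on cited lemmas but needs no facts beyond them. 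One could also note that your application of Lemma 2.13 to $M=R^{(\mathbb{N})}$ followed by passing to the corner ring $e_{11}Se_{11}\cong R$ is a harmless variant of the paper's application of the same lemma to $M=R_R$ using that $R_R^2$ inherits the SSP.
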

\begin{proof}
If $R$ is semisimple, it is obvious that every $R$-module has the SSP. Let  $\Lambda$ be an infinite set. It is not difficult to prove  End($R_{R}^{(\Lambda)})\cong \mathbb{C}\mathbb{F}\mathbb{R}_{\Lambda}(R)$. By Lemma 2.12,  (1)$\Rightarrow$(4)$\Rightarrow$(3).
It is clear that  a countable direct sum  copies of $R_{R}$ can be looked as a direct summand of any infinite direct sum copies of $R_{R}$. Then (3)$\Rightarrow$(2)
 can be obtained by Lemma 2.12.\\
  At last, we prove (2)$\Rightarrow$(1). If $\mathbb{C}\mathbb{F}\mathbb{M}_{\Lambda}(R)$ is SSP for a countably infinite set $\Lambda$, by Lemma 2.12, $R_{R}^{(\mathbb{N})}$ has the SSP. Since the SSP is inherited by direct summands and $R_{R}^{2}$ can be looked as a direct summand of $R_{R}^{(\mathbb{N})}$,  $R_{R}^{2}$  has the SSP. Then $\mathbb{M}_{2}(R)$ is a regular ring by Lemma 2.13. This implies $R$ is a regular ring. Now we show that $R$ is a right perfect ring. First we show that $R_{R}^{(\mathbb{N})}$ is a C2 module. Since $R_{R}^{(\mathbb{N})}$ has the SSP,  $R_{R}^{(\mathbb{N})}$ is a C3 module. As  $(R^{(\mathbb{N})}_{R}\oplus R^{(\mathbb{N})}_{R})\cong R^{(\mathbb{N})}_{R}$, $R^{(\mathbb{N})}_{R}\oplus R^{(\mathbb{N})}_{R}$ is also a C3 module. According to Lemma 2.16, $R^{(\mathbb{N})}_{R}$ is a C2 module. In order to prove that $R$ is a right perfect ring, by \cite[Theorem 28.4]{AF92}, we only need to show that $R$ satisfies $DCC$ on principal left ideals of $R$. The following method is owing to Bass. Let $Ra_{1}\supseteq Ra_{2}a_{1}\supseteq \cdots$ be any descending chain of principal left ideals of $R$. Set $F=R^{(\mathbb{N})}_{R}$ with free basis $x_{1}, x_{2},\ldots$ and $G$ be the submodule of $F$ spanned by $y_{i}=x_{i}-x_{i+1}a_{i}, i\in\mathbb{N}$. By \cite[Lemma 28.1]{AF92}, $G$ is free with basis $y_{1}, y_{2},\ldots$. Thus $G\cong F$. $F$ is a C2 module implies that $G$ is a direct summand of $F$. Then by \cite[Lemma 28.2]{AF92}, the chain $Ra_{1}\supseteq Ra_{2}a_{1}\supseteq \cdots$ terminates. Since $R$ is both regular and right perfect, $R$ is a semisimple ring.
\end{proof}

 By Theorem 2.17 and Lemma 2.12, we have

\begin{cor}\label{cor 2.18}
R is a semisimple ring if and only if $R_{R}^{(\mathbb{N})}$ has the SSP.
\end{cor}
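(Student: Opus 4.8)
The plan is to read the statement off the machinery already assembled, using only the endomorphism-ring description of a countably generated free module together with Lemma 2.12 and Theorem 2.17. First I would recall, as noted in the proof of Theorem 2.17, that End($R_{R}^{(\mathbb{N})}$)$\cong\mathbb{C}\mathbb{F}\mathbb{M}_{\mathbb{N}}(R)$: an $R$-endomorphism of the free module with basis $x_{1},x_{2},\ldots$ is determined by the images $f(x_{j})$, each of which is a finite $R$-linear combination of the $x_{i}$, and conversely any such assignment extends uniquely; recording the coefficients in a matrix produces precisely the column-finite matrices over $R$.

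Second, since $R_{R}^{(\mathbb{N})}$ is free it is projective, hence quasi-projective, so Lemma 2.12 applies to $M=R_{R}^{(\mathbb{N})}$ and gives: $R_{R}^{(\mathbb{N})}$ has the SSP if and only if End($R_{R}^{(\mathbb{N})}$), that is, $\mathbb{C}\mathbb{F}\mathbb{M}_{\mathbb{N}}(R)$, is an SSP ring.

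Third, I would invoke the equivalence (1)$\Leftrightarrow$(2) of Theorem 2.17, which states that $\mathbb{C}\mathbb{F}\mathbb{M}_{\Lambda}(R)$ is SSP for a countably infinite set $\Lambda$ if and only if $R$ is semisimple. Composing the two biconditionals yields the corollary.

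There is essentially no obstacle: the substance has already been carried out inside the proof of Theorem 2.17 (the clause ``$R_{R}^{(\mathbb{N})}$ has the SSP'' occurs there explicitly), and the only mild point to record carefully is the isomorphism End($R_{R}^{(\mathbb{N})}$)$\cong\mathbb{C}\mathbb{F}\mathbb{M}_{\mathbb{N}}(R)$ indicated above. One could equally phrase the argument directly: if $R$ is semisimple then every $R$-module, in particular $R_{R}^{(\mathbb{N})}$, has the SSP; conversely, $R_{R}^{(\mathbb{N})}$ having the SSP forces $\mathbb{C}\mathbb{F}\mathbb{M}_{\mathbb{N}}(R)$ to be SSP by Lemma 2.12, whence $R$ is semisimple by Theorem 2.17.
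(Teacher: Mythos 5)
Your proposal is correct and matches the paper's argument exactly: the paper derives the corollary from Theorem 2.17 and Lemma 2.12 via the isomorphism $\mathrm{End}(R_{R}^{(\mathbb{N})})\cong \mathbb{C}\mathbb{F}\mathbb{M}_{\mathbb{N}}(R)$, which is precisely your route. No gaps; the spelled-out detail about column-finite matrices is a welcome, if routine, addition.
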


\begin{cor}\label{cor 2.19}
\cite[Theorem 9]{AH02}
Let $R$ be a ring. The following are equivalent.
\begin{enumerate}
\item R is semisimple.
\item Every $R$-module has the SSP.
\item Every projective module has the SSP
\end{enumerate}
\end{cor}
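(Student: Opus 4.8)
The plan is to establish the cycle $(1)\Rightarrow(2)\Rightarrow(3)\Rightarrow(1)$. The first implication is immediate from the definition of a semisimple ring: over such a ring every submodule of every module is a direct summand, so for any two direct summands of an $R$-module $M$ their sum, being a submodule of $M$, is again a direct summand; hence every $R$-module has the SSP. The implication $(2)\Rightarrow(3)$ is trivial, since a projective module is in particular an $R$-module.

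The only implication carrying any content is $(3)\Rightarrow(1)$, and here I would simply specialize the hypothesis to the free module $R_{R}^{(\mathbb{N})}$. Being projective, this module has the SSP by $(3)$, and Corollary 2.18 then yields at once that $R$ is semisimple. So in fact all the work has already been done in Theorem 2.17 and Corollary 2.18, and no genuine obstacle remains; the corollary is essentially a restatement of those results together with the trivial observations $(1)\Rightarrow(2)$ and $(2)\Rightarrow(3)$.

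For completeness I note how $(3)\Rightarrow(1)$ would run without invoking Corollary 2.18, by retracing the proof of Theorem 2.17: from the SSP of $R_{R}^{(\mathbb{N})}$ one passes to the direct summand $R_{R}^{2}$, which then has the SSP; by Lemma 2.13 the ring $\mathbb{M}_{2}(R)\cong\mathrm{End}(R_{R}^{2})$ is regular, so $R$ is regular by Theorem 2.14. Since $R_{R}^{(\mathbb{N})}\oplus R_{R}^{(\mathbb{N})}\cong R_{R}^{(\mathbb{N})}$ is a C3 module (it has the SSP), Lemma 2.16 shows $R_{R}^{(\mathbb{N})}$ is a C2 module, and Bass's chain argument via \cite[Lemmas 28.1--28.4]{AF92} then gives the descending chain condition on principal left ideals, so $R$ is right perfect. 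A ring that is simultaneously regular and right perfect is semisimple. Since Corollary 2.18 is at hand, however, this longer route is superfluous.
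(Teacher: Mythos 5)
Your proof is correct and follows the same route as the paper: the implications $(1)\Rightarrow(2)\Rightarrow(3)$ are noted to be immediate, and $(3)\Rightarrow(1)$ is obtained by applying the SSP hypothesis to the projective module $R_{R}^{(\mathbb{N})}$ and invoking Corollary 2.18. The extra sketch retracing Theorem 2.17 is fine but, as you say yourself, superfluous.
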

\begin{proof}
(1)$\Rightarrow$(2)$\Rightarrow$(3) is obvious. By the above corollary, (3)$\Rightarrow$(1).
\end{proof}

 \centerline {\bf ACKNOWLEDGMENTS}

\bigskip\bigskip

\end{document}